\newtheorem{thm}{Theorem}[section]
\newtheorem{cor}[thm]{Corollary}
\newtheorem{lem}[thm]{Lemma}
\newtheorem{rem}[thm]{Remark}
\newtheorem{exm}{Example}
\numberwithin{equation}{section}
\title{ \bf On regular algebraic hypersurfaces with non-zero constant mean curvature in 
	Euclidean spaces}
\author{ \bf Alexandre Paiva Barreto\thanks{Work partially supported by 
		FAPESP:2018/03721-4(Brazil)}, Francisco 
	Fontenele\thanks{Work partially supported by 
		FAPESP:2019/20854-0(Brazil)} and Luiz Hartmann\thanks{Work partially 
		supported 
		by 
	FAPESP:2018/23202-1(Brazil)}}
\date{}
\begin{document}

\maketitle

\begin{quote}
\small {\bf Abstract}. 
We prove that there are no regular algebraic hypersurfaces with non-zero constant mean curvature in the 
Euclidean space $\mathbb R^{n+1}$, $n\geq 2$, defined by polynomials of odd 
degree. Also we prove that the hyperspheres and the round cylinders are the 
only regular algebraic hypersurfaces with non-zero constant mean curvature in 
$\mathbb R^{n+1}$, $n\geq 2$, defined by polynomials of degree less than or 
equal to three. These results give partial answers to a question raised by 
Barbosa and do Carmo.
\end{quote}

\begin{quote}
{\small{\bf 2020 Mathematics Subject Classification:} 53C42, 53A10.}

{\small{\bf Key words and phrases:} constant mean curvature, algebraic hypersurface.}
\end{quote}

\section{Introduction}

An algebraic hypersurface in the $(n+1)$-dimensional Euclidean space $\mathbb R^{n+1},\;n\geq 2,$ is the zero set $M=P^{-1}(0)$ of a polynomial function $P:\mathbb R^{n+1}\to\mathbb R$. We say that $M$ is {\it regular} if the gradient vector field $\nabla P$ of $P$ has no zeros on $M$. The condition of regularity implies that $M$ is a complete properly embedded hypersurface of $\mathbb R^{n+1}$. 

There are many examples of algebraic hypersurfaces in $\mathbb R^{n+1}$ which have constant mean curvature. The basic examples of such hypersurfaces are hyperplanes, hyperspheres and round cylinders. In addition to them, we have, for example, the classical Enneper and Henneberg minimal surfaces in $\mathbb R^3$ \cite{Ni}, and the families of algebraic minimal cones in $\mathbb R^{n+1}$ constructed in \cite{Tk}. For other examples, see \cite{Lo,Od,PT,S1,S2} and the references therein.
 
Barbosa and do Carmo \cite{BdC} proved that the only connected regular algebraic surfaces in $\mathbb R^3$ with non-zero constant mean curvature are the spheres and the right circular cylinders, a result that was already known for polynomials of degree less than or equal to three \cite{Pe}. For generalizations of this result for globally subanalytic CMC surfaces in $\mathbb R^3$ see \cite{BBdCF,Sa}. 

Motivated by the theorem mentioned in the previous paragraph and by the fact that the hyperspheres and the round cylinders are the only examples of regular algebraic hypersurfaces in $\mathbb R^{n+1}$ with non-zero constant mean curvature known so far, Barbosa and do Carmo \cite[p. 177]{BdC} proposed the following extension of their own result:

\vskip5pt

“The hyperspheres and the round cylinders are the only connected regular algebraic hypersurfaces in $\mathbb R^{n+1},\;n\geq 2$, with non-zero constant mean curvature.”

\vskip10pt

From Perdomo and Tkachev \cite{PT} we know that there are no regular algebraic hypersurfaces with non-zero constant mean curvature in $\mathbb R^{n+1},\;n\geq 2,$ defined by polynomials of degree $3$. Here, we prove that there can be no examples defined by polynomials of any odd degree:

\begin{thm}\label{Teo1}
Let $M^n$ be a regular algebraic hypersurface in $\mathbb R^{n+1},\;n\geq 2$, defined by a polynomial $P$ of degree $m$. If $M^n$ has non-zero constant mean curvature, then $m$ is even. 
\end{thm}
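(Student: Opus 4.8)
The plan is to turn the constant mean curvature equation into an identity between polynomials that must hold on $M$, and then to derive a contradiction by examining $M$ near infinity, where an odd-degree defining polynomial forces the ``cone at infinity'' to be a genuine hypersurface.

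Write $P=P_m+P_{m-1}+\cdots+P_0$ with $P_m\not\equiv 0$ the top-degree homogeneous part. By regularity the unit normal $\nu=\nabla P/\abs{\nabla P}$ is defined along $M$, and $nH=-\operatorname{div}\nu=-\,\Phi/\abs{\nabla P}^{3}$ on $M$, where
\[
\Phi:=(\Delta P)\,\abs{\nabla P}^{2}-\operatorname{Hess}P(\nabla P,\nabla P)
\]
is a polynomial of degree at most $3m-4$. Since $H$ is constant, squaring shows that the polynomial
\[
G:=\Phi^{2}-n^{2}H^{2}Q^{3},\qquad Q:=\abs{\nabla P}^{2},
\]
vanishes identically on $M$. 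Now $Q$ has degree $2(m-1)$ and top-degree part $\abs{\nabla P_m}^{2}\not\equiv 0$, so $G$ has degree exactly $6(m-1)$ and its top-degree part is $-\,n^{2}H^{2}\abs{\nabla P_m}^{6}$, which is nonzero precisely because $H\neq 0$.

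Assume, towards a contradiction, that $m$ is odd. First, $M$ is unbounded: a compact regular algebraic hypersurface of nonzero constant mean curvature would force $P_m$ to be semi-definite, hence of even degree. Second, $P_m$ is a nonzero homogeneous polynomial of odd degree, so it takes both signs on $S^{n}$; thus its real zero cone $C:=\{P_m=0\}$ separates $\mathbb{R}^{n+1}$ and has dimension $n$, i.e.\ it is a genuine codimension-one algebraic cone. I would now blow down: put $M_t:=tM=\{\,y:\tilde P_t(y)=0\,\}$, where $\tilde P_t(y):=t^{m}P(y/t)=P_m(y)+tP_{m-1}(y)+\cdots+t^{m}P_0\to P_m$ as $t\to 0^{+}$, so that $M_t\to C$. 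If this convergence is smooth on compact subsets of the smooth locus of $C$ minus the vertex, then the mean curvature of $M_t$ converges there to that of $C$, a finite function; but $M_t$ has mean curvature $H/t$, which blows up since $H\neq 0$ is constant — contradiction. (Equivalently: $G$ must vanish on the set of directions attained by $M$ at infinity, which is dense in $C$; at a point of $C$ with $\nabla P_m\neq 0$ this says $\abs{\nabla P_m}^{6}=0$, which is false, so the only possibility is $C\subseteq\{\nabla P_m=0\}$, a degenerate configuration that still has to be ruled out.)

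The step I expect to be the main obstacle is exactly this smooth convergence $M_t\to C$, carried out uniformly up to the smooth points of $C$ at which the leading form $P_m$ vanishes to higher order — such points do occur; for instance $P=x_1^{m}+x_2$ is regular with leading form $P_m=x_1^{m}$, whose zero set $\{x_1=0\}$ consists entirely of critical points of $P_m$. At those points the implicit function theorem does not apply directly to the family $\tilde P_t$, and one has to analyse the Newton polygon of $\tilde P_t$ in local coordinates to check that $M_t$ is nevertheless a smooth graph for small $t$ that flattens onto $C$. Away from such directions the estimate is elementary; it is this Puiseux-type control of the shape of $M$ at infinity — combined with the fact, special to odd $m$, that the cone $C$ has codimension one and not more — that powers the proof.
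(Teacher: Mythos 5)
There is a genuine gap, and you have in fact pointed at it yourself: the entire contradiction rests on the blow-down $M_t=tM$ converging \emph{smoothly} (in $C^2$, on compact subsets) to the cone $C=\{P_m=0\}$ near some smooth point of $C$, so that the mean curvatures $H/t$ of $M_t$ converge to a finite limit. This is exactly what is not established, and it genuinely fails in the generality you need: your own example $P=x_1^m+x_2$ shows that for a regular $P$ the zero cone of the leading form can consist entirely of critical points of $P_m$, where the implicit function theorem gives no control of $M_t$ and no curvature convergence. The proposed Newton-polygon/Puiseux analysis is not carried out, so the argument is a program rather than a proof. Your fallback via the polynomial $G=\Phi^2-n^2H^2\abs{\nabla P}^6$ does not close the gap either: what it yields (after justifying that the top-degree form of $G$ vanishes on limit directions of $M$ at infinity -- the density of those directions in $C$ that you invoke is not true in general) is only that every limit direction $v$ satisfies $P_m(v)=0$ and $\nabla P_m(v)=0$, i.e.\ the ``degenerate configuration'' you concede ``still has to be ruled out.'' Ruling it out is the actual content of the theorem in this approach, and nothing in the proposal does it. A smaller but real flaw: the assertion that a compact regular algebraic CMC hypersurface would force $P_m$ to be semi-definite is unproved (the easy implication goes the other way); unboundedness for odd $m$ does hold, but by a direct sign argument on lines through the origin, not by the reason you give.

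For contrast, the paper avoids all analysis of $M$ at infinity. It proves the stronger statement that non-zero constant mean curvature forces $P_m$ to be semi-definite, as follows: if $P_m$ took a positive value at some $w\in S^n_1$, then from the uniform convergence $t^{-m}P(tv)\to P_m(v)$ on $S^n_1$ one gets a solid truncated cone $\{tv:t>t_0,\ v\in W\}$ contained in $U^{+}=\{P>0\}$, hence balls $B\subset U^{+}$ of arbitrarily large radius $R$; taking the point $p\in M$ nearest the center of $B$ and comparing with the sphere through $p$ centered at that point gives $\langle Av,v\rangle\leq 1/\norm{x_0-p}\leq 1/R$ for all unit $v\in T_pM$, hence $H\leq 1/R$ for every $R$, contradicting $H=c>0$. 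This uses only the closedness of $M$ and a second-derivative test, with no regularity of the cone at infinity, which is precisely the hypothesis your route cannot secure.
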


The theorem below shows that the question formulated by Barbosa and do Carmo has an affirmative answer for polynomials of degree less than or equal to three.  

\begin{thm}\label{Teo2}
Let $M^n$ be a regular algebraic hypersurface in $\mathbb R^{n+1},\;n\geq 2,$ defined by a polynomial $P$ of degree less than or equal to three. If $M^n$ has non-zero constant mean curvature, then $M^n$ is a hypersphere or a round cylinder.
\end{thm}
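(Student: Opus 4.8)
\emph{Plan.} By Theorem~\ref{Teo1} the degree $m$ of $P$ is even, and since $0\le m\le 3$ with $m=0$ impossible (it would make $M$ empty or all of $\Real^{n+1}$) we have $m=2$. Writing $P(x)=\langle Ax,x\rangle+\langle b,x\rangle+c$ with $A$ a nonzero symmetric $(n+1)\times(n+1)$ matrix, the task reduces to showing that a regular quadric with non-zero constant mean curvature must be a hypersphere $S^{n}(r)$ or a round cylinder $S^{k-1}(r)\times\Real^{\,n+1-k}$ with $2\le k\le n$. The idea is to extract from the CMC condition a polynomial identity that pins down the quadratic part $A$, and then to conclude by a direct computation in a normal form.

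\emph{The algebraic identity.} On a regular level set $M=P^{-1}(0)$ the mean curvature relative to $\nabla P/\abs{\nabla P}$ satisfies $nH=\operatorname{div}\bigl(\nabla P/\abs{\nabla P}\bigr)$, which after clearing denominators becomes
\begin{equation*}
\abs{\nabla P}^{2}\,\Delta P-\bigl\langle(\operatorname{Hess}P)\,\nabla P,\,\nabla P\bigr\rangle \;=\; nH_{0}\,\abs{\nabla P}^{3}\qquad\text{on }M.
\end{equation*}
When $P$ is quadratic, $\operatorname{Hess}P=2A$ and $\Delta P=2\operatorname{tr}A$ are constant while $\nabla P=2Ax+b$ is affine, so the left-hand side is a polynomial of degree at most $2$. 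Squaring (valid since $\nabla P\ne 0$ on $M$) produces a polynomial $R$ of degree exactly $6$, whose degree-$6$ part is $64\,n^{2}H_{0}^{2}\,\langle A^{2}x,x\rangle^{3}\ne 0$, and which vanishes on $M$. Since $P$ is quadratic and $M$ is a regular real hypersurface, $P$ is squarefree and $M$ is Zariski dense in the complex zero set of $P$, whence $P\mid R$. Writing $R=PS$ and comparing degree-$6$ parts yields $\langle Ax,x\rangle\mid\langle A^{2}x,x\rangle^{3}$ in $\Real[x_{1},\dots,x_{n+1}]$. Diagonalizing $A$ and using that a quadratic form of rank $\ge 2$ is squarefree — hence divides the base of any cube it divides — one obtains $A^{2}=cA$ for some $c\ne 0$ (this is automatic when $\operatorname{rank}A=1$). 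Therefore $A=c\,\Pi$, where $\Pi$ is the orthogonal projection onto a $k$-dimensional subspace, $1\le k\le n+1$.

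\emph{Normal form and conclusion.} After a rigid motion and completing the square we may assume $P(x)=c\,(x_{1}^{2}+\cdots+x_{k}^{2})+\beta x_{k+1}+\gamma$ with $\beta\ge 0$ (and $\beta=0$ if $k=n+1$). Setting $\rho=x_{1}^{2}+\cdots+x_{k}^{2}$, a direct computation shows that at every point of $M$
\begin{equation*}
nH=\frac{2c\bigl[\,4c^{2}(k-1)\rho+k\beta^{2}\,\bigr]}{\bigl(4c^{2}\rho+\beta^{2}\bigr)^{3/2}}.
\end{equation*}
If $\beta\ne 0$, then $\rho$ takes every value in $[0,\infty)$ along $M$, and the right-hand side equals $2ck/\abs{\beta}\ne 0$ when $\rho=0$ but tends to $0$ as $\rho\to\infty$; so $H$ is nonconstant, which rules out the parabolic cylinders and all paraboloids. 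Hence $\beta=0$, so $\rho\equiv-\gamma/c$ on $M$, and regularity forces $-\gamma/c>0$ (otherwise $M$ is empty, singular, or a hyperplane). Consequently $M$ is isometric to $S^{k-1}\bigl(\sqrt{-\gamma/c}\,\bigr)\times\Real^{\,n+1-k}$, whose mean curvature has absolute value $\tfrac{k-1}{n}\,(-\gamma/c)^{-1/2}$; since $H_{0}\ne 0$ this requires $k\ge 2$, and we conclude that $M$ is a round cylinder, or a hypersphere when $k=n+1$.

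\emph{Main obstacle.} The delicate point is the implication ``$R$ vanishes on $M\Rightarrow P\mid R$'', which uses that $P$ is quadratic — so that $M$ is Zariski dense in the complex variety $\{P=0\}$ — and requires ruling out by hand the reducible quadratics, namely pairs of hyperplanes, which are either non-regular or satisfy $H\equiv 0$. Everything afterwards is a finite, if somewhat tedious, computation, and it is exactly Theorem~\ref{Teo1} that makes it possible to start from a quadratic $P$ rather than also having to analyze cubics directly.
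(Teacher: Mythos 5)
Your proposal is correct in outline and the computations check, but it takes a genuinely different route from the paper. The paper, after using Theorem~\ref{Teo1} to force $m=2$, proceeds by induction on $n$: the base case $n=2$ rests on the classification of quadric surfaces in $\mathbb R^3$; when the quadratic part $P_2$ is definite, compactness (Theorem~\ref{Definite}), Alexandrov's theorem and Lemma~\ref{Pol} give a single hypersphere; when $P_2$ is indefinite, the large-inscribed-ball estimate $H\le 1/R$ from the proof of Theorem~\ref{Stronger} kills the coefficient of $x_{n+1}$, so $P$ is independent of one variable and $M$ splits as a cylinder over a lower-dimensional regular CMC quadric, closing the induction. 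You instead square the level-set CMC equation to get a degree-$6$ polynomial $R$ vanishing on $M$, use a real-Nullstellensatz/Zariski-density argument to conclude $P\mid R$, compare top-degree parts to get $\langle Ax,x\rangle\mid\langle A^2x,x\rangle^3$, deduce $A^2=cA$ and a normal form, and finish by an explicit computation of $H$. This avoids both Alexandrov's theorem and the induction, and in effect classifies all regular CMC quadrics in every dimension directly (the paper inducts precisely because no classification of quadric hypersurfaces is available for $n\ge 3$), at the price of the algebraic step: squarefreeness alone does not yield Zariski density of $M$ in the complex zero set -- you need $P$ irreducible over $\mathbb R$ (hence over $\mathbb C$, since a smooth real point exists), and the reducible quadrics, i.e.\ pairs of real hyperplanes, must be excluded separately, as you indicate (double or intersecting hyperplanes are non-regular, distinct parallel ones are minimal, so $H_0\neq 0$ rules them out). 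With that step made precise your argument is complete; the sign of the degree-$6$ coefficient and the orientation convention for $H$ are immaterial, and your final case analysis ($\beta\neq 0$ gives nonconstant $H$; $\beta=0$ gives $S^{k-1}\times\mathbb R^{n+1-k}$ with $k\ge 2$, the case $k=n+1$ being the hypersphere) matches the paper's conclusion.
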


Using Theorem \ref{Teo1} one concludes that if $M^n\subset\mathbb R^{n+1},\;n\geq 2,$ is a regular algebraic hypersurface that has non-zero constant mean curvature and is defined by a polynomial $P$ of degree $m\leq 3$, then $m=2$, i.e., $M^n$ is a quadric hypersurface in $\mathbb R^{n+1}$. Thus, in the case $n=2$, Theorem \ref{Teo2} follows immediately from Theorem \ref{Teo1} and the well known classification of quadric surfaces in $\mathbb R^3$. However, the situation is quite different for $n\geq 3$, since, to the best of our knowledge, there is not a classification of quadric hypersurfaces in $\mathbb R^{n+1}$ for $n\geq 3$.

\section{Our arguments}

A polynomial $P:\mathbb R^{n+1}\to\mathbb R$ of degree $m$ can be expressed in 
a unique way as the sum 
\begin{equation}\label{Decomposition}
P=\sum_{i=0}^mP_i,
\end{equation}
where $P_m\neq 0$ and each $P_i$ is a homogeneous polynomial of degree $i$. We 
call $P_i,\;i=0,...,m,$ the {\it homogeneous factors} of $P$, and $P_m$ the 
{\it highest order homogeneous factor} of $P$. Since $P_m$ clearly changes 
sign when $m$ is odd, Theorem \ref{Teo1} is a consequence of the stronger theorem below. From now on we consider that a regular algebraic hypersurface $M=P^{-1}(0)$ is oriented by the global unit normal vector field $N=\nabla P/|\nabla P|$.

\begin{thm}\label{Stronger}
Let $M^n$ be a regular algebraic hypersurface in $\mathbb R^{n+1},\;n\geq 2$, 
given by a polynomial $P$ of degree $m$. If $M^n$ has 
non-zero constant mean curvature, then the highest order homogeneous factor 
$P_m$ of $P$ is semi-definite, i.e., either $P_m(x)\geq 0$ for all $x\in\mathbb 
R^{n+1}$ or $P_m(x)\leq 0$ for all $x\in\mathbb R^{n+1}$.
\end{thm}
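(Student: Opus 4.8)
The plan is to argue by contradiction: suppose $P_m$ is not semi-definite, so there exist points where $P_m$ is positive and points where it is negative. I would then exploit the behavior of $M=P^{-1}(0)$ near infinity, where $P$ is dominated by $P_m$. The first step is a blow-down analysis: for large $R$, consider the rescaled hypersurfaces $\frac{1}{R}M = (P(R\,\cdot))^{-1}(0)$, or equivalently study $M$ in the chart at infinity. Since $P(x) = P_m(x) + (\text{lower order})$, on the sphere $|x| = R$ the function $R^{-m}P(Rx)$ converges uniformly (with derivatives) to $P_m(x)$ as $R\to\infty$. Because $P_m$ changes sign, its zero set $C = P_m^{-1}(0)$ is a nonempty cone separating the region $\{P_m > 0\}$ from $\{P_m < 0\}$, and $M$ must have unbounded components "asymptotic" to this cone. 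The CMC condition is scale-variant: if $M$ has mean curvature $H\neq 0$, then $\frac1R M$ has mean curvature $RH \to \infty$. So in the limit the rescaled hypersurfaces should converge (in a suitable local sense, on compact subsets away from the origin) to a minimal cone, namely a piece of $C = P_m^{-1}(0)$, with $P_m$ harmonic-like behavior — more precisely, $P_m$ restricted to directions where $\nabla P_m \neq 0$ defines a minimal hypersurface.

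The key technical steps I would carry out, in order, are: (1) Set up the rescaling $P^{(R)}(x) := R^{-m}P(Rx)$ and record that $P^{(R)} \to P_m$ in $C^k_{loc}(\mathbb{R}^{n+1})$ for every $k$. (2) Show that regularity of $M$ forces $\nabla P_m \neq 0$ on a dense open subset of $C$, and on that set a neighborhood of $C$ in $M$, rescaled, converges smoothly to a neighborhood in $C$; this needs the implicit function theorem applied uniformly, using that on the region where $|\nabla P_m|$ is bounded below the convergence $\nabla P^{(R)} \to \nabla P_m$ is uniform. (3) Pass the mean curvature equation to the limit: the mean curvature of $\frac1R M$ at a point near $C$ equals $R\,H$ up to controlled error, while the smooth limit $C$ (being the zero set of the homogeneous $P_m$, a cone) has bounded mean curvature on compact subsets away from the origin. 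Letting $R\to\infty$ gives a contradiction with $H\neq 0$ — the left side blows up, the right side stays bounded. Actually the cleanest version: rescale instead so the mean curvature is fixed, i.e. look at $M$ near a far away point $p_R\in M$ with $|p_R|\to\infty$ chosen near the cone $C$, translate $p_R$ to the origin and rescale by $|p_R|$; the limit is a minimal hypersurface (since $H$ is fixed, not blowing up, so after the translation-rescaling by the large factor the mean curvature goes to $0$) which is a cone $P_m^{-1}(0)$ translated — but a nonflat minimal cone with vertex off itself, or the structure forces $P_m$ to vanish on an affine half-space, contradicting that $P_m$ is homogeneous and nonzero.

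I expect the main obstacle to be step (2)–(3): controlling the geometry of $M$ uniformly near infinity where $\nabla P_m$ may vanish (the singular locus of the cone $C$), and ruling out pathological behavior of $M$ that "hugs" the singular set $\mathrm{Sing}(C) = \{P_m = 0,\ \nabla P_m = 0\}$. One needs a dimension or genericity argument showing that $M$ cannot escape to infinity only along $\mathrm{Sing}(C)$ — for instance, because $M$ is a hypersurface (codimension one) and $\mathrm{Sing}(C)$ has codimension at least two in $\mathbb{R}^{n+1}$, any unbounded piece of $M$ asymptotic to $C$ must meet the regular part of $C$. Making the convergence and the mean-curvature comparison rigorous on the regular part, with quantitative control that survives the limit, is the technical heart; everything else (the rescaling, the sign-change of $P_m$ producing a nonempty separating cone, the contradiction from $H\neq 0$) is comparatively routine.
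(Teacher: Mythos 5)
Your blow-down scheme has a genuine gap at exactly the step you flag as the technical heart, and the suggested fix does not close it. The contradiction in steps (2)--(3) requires the rescaled hypersurfaces to accumulate on the \emph{regular} part of the cone $C=P_m^{-1}(0)$, i.e.\ on zeros of $P_m$ where $\nabla P_m\neq 0$, so that the implicit function theorem applies uniformly and mean curvatures pass to the limit. But a sign-changing $P_m$ need not have any regular zeros: take $P_m=x_1^{3}$ (or any odd power of a linear form, possibly times a positive definite factor). Then $C=\{x_1=0\}$, $\nabla P_m\equiv 0$ on $C$, so $\mathrm{Sing}(C)=C$ has codimension one, not two, and your dimension/genericity argument that ``$M$ cannot escape to infinity only along $\mathrm{Sing}(C)$'' is unavailable -- precisely in the odd-degree situation the theorem is designed to exclude. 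Moreover, the claim in step (2) that regularity of $M$ forces $\nabla P_m\neq 0$ on a dense subset of $C$ is unsupported: $\nabla P\neq 0$ on $P^{-1}(0)$ gives no control whatsoever on $\nabla P_m$ along $P_m^{-1}(0)$. The alternative ``cleanest version'' (translate to far points $p_R\in M$, rescale by $|p_R|$, extract a minimal limit) has its own gaps: you have not produced such points near $C$, and extracting a limit of CMC hypersurfaces requires local area and curvature estimates that are not established here; the final contradiction (``minimal cone with vertex off itself'') is not pinned down.

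The paper's proof uses the same asymptotic intuition ($P\approx P_m$ at infinity, and the scale-variance of $H$) but implements it so as to avoid the zero set of $P_m$ entirely, which is what makes it work. Normalize $H=c>0$ with $N=\nabla P/|\nabla P|$, so the mean curvature vector points into $U^{+}=\{P>0\}$; if $P_m$ changes sign there is $w\in S^n_1$ with $P_m(w)>0$, and the uniform convergence $t^{-m}P(tv)\to P_m(v)$ on $S^n_1$ shows that a truncated solid cone $\{tv:\ t>t_0,\ v\in W\}$ lies in $U^{+}$, hence $U^{+}$ contains balls of arbitrarily large radius $R$. Taking the point $p\in M$ nearest the center $x_0$ of such a ball and differentiating $f(x)=\|x-x_0\|^2$ along curves in $M$ gives $\langle Av,v\rangle\leq 1/\|x_0-p\|$ for all unit $v\in T_pM$, whence $H\leq 1/\|x_0-p\|\leq 1/R$ for every $R$, contradicting $H=c>0$. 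If you replace your steps (2)--(3) by this open-region barrier argument (only the set where $P_m>0$ is used, never the structure of $C$), your step (1) rescaling already supplies everything needed.
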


\begin{proof}
Let (\ref{Decomposition}) be the expression of $P$ as the sum of homogeneous polynomials. Writing a point in $\mathbb R^{n+1}$ as $x=(x_1,...,x_{n+1})$, let $U^{+}=\{x\in\mathbb R^{n+1}:P(x)>0\}$ and $U^{-}=\{x\in\mathbb R^{n+1}:P(x)<0\}$. Since the mean curvature $H$ of $M$ is a non-zero constant by hypothesis, changing $P$ by $-P$ if necessary, we can assume that $H=c>0$. This means that the mean curvature vector $\overrightarrow{H}$ of $M^n$ points in the direction of $U^{+}$.

By (\ref{Decomposition}), for any $t\in\mathbb R$ and any vector $v$ in the unit sphere $S^n_1$ we have 
\begin{eqnarray}\label{Eq8}
P(tv)=P_m(v)t^m+P_{m-1}(v)t^{m-1}+\cdots+P_1(v)t+P_0,
\end{eqnarray}
and so
\begin{eqnarray}\label{Eq9}
\frac{P(tv)}{t^m}-P_m(v)=\sum_{i=0}^{m-1}\frac{P_i(v)}{t^{m-i}},\;\;\;\;v\in S^n_1,\;t\neq 0.
\end{eqnarray}
Using (\ref{Eq9}) and the compactness of $S^n_1$, one easily sees that $t^{-m}P(tv)\to P_m(v)$ uniformly on $S^n_1$ when $t\to\infty$.

Suppose, by contradiction, that $P_m$ changes sign. Then, since $P_m$ is homogeneous, there exists a vector $w$ in the unit sphere $S^n_1\subset\mathbb R^{n+1}$ such that $P_m(w)>0$. Hence, by continuity, there is a closed disk $W$ around $w$ in $S^n_1$ such that $P_m(v)>P_m(w)/2$ for all $v\in W$. Since $t^{-m}P(tv)\to P_m(v)$ uniformly on $S^n_1$, there exists $t_0>0$ such that $|t^{-m}P(tv)-P_m(v)|<P_m(w)/4$ for all $t>t_0$ and $v\in S^n_1$. Combining these informations, one obtains   
\begin{eqnarray}\label{Eq9b}
t^{-m}P(tv)&\geq& -|t^{-m}P(tv)-P_m(v)|+P_m(v)\nonumber\\&>&-\frac{1}{4}P_m(w)+\frac{1}{2}P_m(w)=\frac{1}{4}P_m(w)>0,\;\;\;t>t_0,\;v\in W,
\end{eqnarray}
and so  
\begin{eqnarray}\label{Eq9d}
\{tv:t>t_0,\;v\in W\}\subset U^{+}.
\end{eqnarray}

Let $R$ be an arbitrary positive number. By (\ref{Eq9d}), there exists a ball $B$ of radius $R$ in $\mathbb R^{n+1}$ such that $B\subset U^{+}$. Let $x_0$ be the center of $B$ and $f:M\to\mathbb R$ the function defined by $f(x)=||x-x_0||^2$. Since $M$ is a closed subset of $\mathbb R^{n+1}$, there is a point $p\in M$ such that $f(p)=\inf_M f$. Let $v$ be an arbitrary unit vector of $T_pM$ and $\gamma:I\to M$ a smooth curve such that $\gamma(0)=p$ and $\gamma'(0)=v$. Since the function $f(\gamma(t))$ attains a minimum at $t=0$, one has
\begin{eqnarray}\label{Eq3a}
0=\frac{d}{dt}\Big|_{t=0}f(\gamma(t))=\frac{d}{dt}\Big|_{t=0}\langle\gamma(t)-x_0,\gamma(t)-x_0\rangle=2\langle v,p-x_0\rangle
\end{eqnarray}
and
\begin{eqnarray}\label{Eq3b}
0\leq\frac{d^2}{dt^2}\Big|_{t=0}f(\gamma(t))=2\langle\gamma''(0),p-x_0\rangle+2.
\end{eqnarray}
From (\ref{Eq3a}) one obtains that $p-x_0$ is orthogonal to $T_pM$. Since $N$ points inward $U^+$ and $x_0\in U^{+}$, it follows that $N(p)=(x_0-p)/||x_0-p||$. Using this information in (\ref{Eq3b}), one obtains
\begin{eqnarray}\label{Eq3c}
\langle Av,v\rangle=-\langle(N\circ\gamma)'(0),\gamma'(0)\rangle=\langle N(p),\gamma''(0)\rangle\leq\frac{1}{||x_0-p||},\nonumber
\end{eqnarray}
for any unit vector $v\in T_pM$, where $A$ is the shape operator of $M$ with respect to $N$. Taking the trace in the above inequality and using the fact that $p\not\in B$, one concludes that
\begin{eqnarray}\label{Eq7}
H=H(p)\leq\frac{1}{||x_0-p||}\leq\frac{1}{R},\nonumber 
\end{eqnarray} 
for every $R>0$, contradicting $H=c>0$. This contradiction shows that $P_m$ does 
not change sign, and the theorem is proved.
\end{proof}

\begin{rem}
{\em The arguments used in the proof of Theorem \ref{Stronger} show that the complement of the zero set of any non-zero polynomial, in any number of variables, contains balls of arbitrarily large radius.}
\end{rem}

In the proof of Theorem \ref{Teo2}, as well as in the proof of Corollary \ref{CorDefinite}, we will use the following result, which is of interest in its own right. We believe this result is known, but since we were unable to find a reference in the literature, we will provide a proof for it here. 
\begin{lem}\label{Pol}
Let $P:\mathbb R^{n+1}\to\mathbb R$ be a polynomial of degree $m,\;m\geq 2$. If for some $k,\;1\leq k\leq n,$ $P(x)$ vanishes on $S_r^k(a)\times\mathbb R^{n-k}$, where $S_r^k(a)$ is the hypersphere of $\mathbb R^{k+1}$ of radius $r$ and center $a$, then $P(x)$ is divisible by the polynomial 
$$
Q(x)=\sum_{i=1}^{k+1}(x_i-a_i)^2-r^2.
$$
\end{lem}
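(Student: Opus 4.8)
The plan is to reduce the statement to an ordinary polynomial division in one variable. After the translation $x\mapsto x-a$ we may assume $a=0$, so that $Q(x)=x_1^2+\cdots+x_{k+1}^2-r^2$ and $\Sigma:=S_r^k(0)\times\mathbb R^{n-k}$ is the set $\{x\in\mathbb R^{n+1}:x_1^2+\cdots+x_{k+1}^2=r^2\}$, on which $P$ vanishes by hypothesis. I would then regard $P$ and $Q$ as polynomials in the single variable $x_{k+1}$ with coefficients in the ring $\mathcal R=\mathbb R[x_1,\dots,x_k,x_{k+2},\dots,x_{n+1}]$. Since $Q=x_{k+1}^2+q$ with $q=x_1^2+\cdots+x_k^2-r^2\in\mathcal R$, the polynomial $Q$ is \emph{monic} of degree two in $x_{k+1}$, and hence the division algorithm over the commutative ring $\mathcal R$ produces $C\in\mathbb R[x_1,\dots,x_{n+1}]$ and $R_0,R_1\in\mathcal R$ with
\[
P=QC+R_1\,x_{k+1}+R_0 .
\]

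The next step is to show that the remainder $R:=R_1x_{k+1}+R_0$ vanishes identically. Since $P$ and $Q$ both vanish on $\Sigma$, so does $R$. Fix any $(x_1,\dots,x_k)\in\mathbb R^k$ with $\rho:=x_1^2+\cdots+x_k^2<r^2$ and any $(x_{k+2},\dots,x_{n+1})\in\mathbb R^{n-k}$, and put $s:=\sqrt{r^2-\rho}>0$. The two points of $\mathbb R^{n+1}$ obtained from these data by taking $x_{k+1}=\pm s$ both lie on $\Sigma$, so evaluating $R$ at them yields $R_0+sR_1=0$ and $R_0-sR_1=0$ (the values of $R_0$ and $R_1$ being taken at the corresponding point of $\mathbb R^n=\mathbb R^k\times\mathbb R^{n-k}$). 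As $s\neq0$, this forces $R_0=R_1=0$ at that point. These points fill the nonempty open subset $\{x_1^2+\cdots+x_k^2<r^2\}\times\mathbb R^{n-k}$ of $\mathbb R^n$ (nonempty because $r>0$), and $R_0,R_1$ are polynomials, so $R_0\equiv R_1\equiv0$. Therefore $P=QC$, which is the claimed divisibility.

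I do not expect a genuine obstacle in this argument; the two points deserving care are (i) the legitimacy of the division, which relies on $Q$ being monic in $x_{k+1}$, so that the quotient $C$ is an honest polynomial and no denominators are introduced, and (ii) the elementary geometric observation that over each base point in the open ball the fibre of $\Sigma$ in the $x_{k+1}$-direction consists of \emph{two distinct} points $\pm s$, which is precisely what makes the $2\times2$ linear system for $(R_0,R_1)$ nonsingular. (Alternatively, for $k\geq1$ one could invoke the irreducibility of $Q$ over $\mathbb R$ and the smoothness of the hypersurface $\Sigma$ to deduce $Q\mid P$, but the division argument is more elementary and sidesteps the real Nullstellensatz.) The hypothesis $m\geq2$ plays no role beyond guaranteeing that the statement is not vacuous, since $\deg Q=2$.
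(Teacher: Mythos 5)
Your argument is correct, and it takes a genuinely different route from the paper's. The paper proves the lemma (for $a=0$, $r=1$, general case by substituting $rx+a$) by splitting $P$ into its homogeneous factors, using the symmetry $v\mapsto -v$ of $S^k_1(0)\times\mathbb R^{n-k}$ to make the even and odd parts vanish separately there, and then exploiting homogeneity at the points $x/|y|$ (with $y$ the projection onto the first $k+1$ coordinates) to rewrite $P(x)$ as a sum of terms carrying factors $1-|y|^{2j}$, each divisible by $1-|y|^2$; the even- and odd-degree cases are treated separately, with the odd case omitted as analogous. You instead perform division with remainder in the single variable $x_{k+1}$, which is legitimate because $Q=x_{k+1}^2+(x_1^2+\cdots+x_k^2-r^2)$ is monic in $x_{k+1}$ over $\mathbb R[x_1,\dots,x_k,x_{k+2},\dots,x_{n+1}]$, and then kill the degree-one remainder $R_1x_{k+1}+R_0$ by evaluating at the two distinct fibre points $x_{k+1}=\pm\sqrt{r^2-\rho}$ over each base point with $\rho<r^2$, concluding $R_0\equiv R_1\equiv 0$ since they vanish on a nonempty open subset of $\mathbb R^n$. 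Your approach is uniform in the degree and in $(a,r)$, so it avoids both the even/odd case split and the final rescaling step, and it sidesteps the (implicit) density argument the paper needs to pass from $y\neq 0$ to all of $\mathbb R^{n+1}$; the paper's computation, on the other hand, exhibits the cofactor concretely in terms of the homogeneous factors of $P$ and reads off its degree $m-2$ directly, though your division also yields that degree bound for free. You are also right that the hypothesis $m\geq 2$ is not needed for your argument (for nonzero $P$ of degree at most one it simply shows $P$ cannot vanish on the cylinder).
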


\begin{proof}
We will prove the lemma in the case where $m=2d,\;d\geq 1$. The proof in the case that the degree of $P(x)$ is odd is entirely analogous and will be omitted. 

Assume first $a=0$ and $r=1$. Write $P$ as
\begin{eqnarray}\label{Polyn1}
P=\sum_{i=0}^dP_{2i}+\sum_{i=0}^{d-1}P_{2i+1},
\end{eqnarray}
where $P_j, j=0,...,2d,$ is a homogeneous polynomial of degree $j$. By hypothesis, for every $v\in S^k_1(0)\times\mathbb R^{n-k}$ one has 
\begin{equation*}\label{Polyn2}
\begin{aligned}
0=&P(v) =\sum_{i=0}^dP_{2i}(v)+\sum_{i=0}^{d-1}P_{2i+1}(v),\\
0=&P(-v)=\sum_{i=0}^dP_{2i}(v)-\sum_{i=0}^{d-1}P_{2i+1}(v).
\end{aligned}
\end{equation*}
From the two equalities above one obtains 
$\sum\limits_{i=0}^dP_{2i}(v)=0=\sum\limits_{i=0}^{d-1}P_{2i+1}(v)$, which 
implies 
\begin{eqnarray}\label{Polyn8}
P_{2d}(v)=-\sum_{i=0}^{d-1}P_{2i}(v),\;\;\;P_{2d-1}(v)=-\sum_{i=0}^{d-2}P_{2i+1}(v),\;\;\;\forall v\in S^k_1(0)\times\mathbb R^{n-k}.
\end{eqnarray}
Let $\{e_1,...,e_{n+1}\}$ be the canonical basis of $\mathbb R^{n+1}$. For any 
$x=(x_1,...,x_{n+1})\in\mathbb R^{n+1}$ such that 
$y:=\sum\limits_{i=1}^{k+1}x_ie_i\neq 0$ it holds that $x/|y|\in 
S^k_1(0)\times\mathbb R^{n-k}$. By (\ref{Polyn1}) and (\ref{Polyn8}), for all 
such $x$ one has  
\begin{eqnarray}\label{Polyn6}
P(x)&=&\sum_{i=0}^{d-1}P_{2i}(x)+P_{2d}(x)+\sum_{i=0}^{d-2}P_{2i+1}(x)+P_{2d-1}(x)\nonumber\\&=&\sum_{i=0}^{d-1}P_{2i}(x)+|y|^{2d}P_{2d}(x/|y|)+\sum_{i=0}^{d-2}P_{2i+1}(x)+|y|^{2d-1}P_{2d-1}(x/|y|)\nonumber\\&=&\sum_{i=0}^{d-1}P_{2i}(x)-|y|^{2d}\sum_{i=0}^{d-1}P_{2i}(x/|y|)+\sum_{i=0}^{d-2}P_{2i+1}(x)-|y|^{2d-1}\sum_{i=0}^{d-2}P_{2i+1}(x/|y|)\nonumber\\&=&\sum_{i=0}^{d-1}(1-|y|^{2(d-i)})P_{2i}(x)+\sum_{i=0}^{d-2}(1-|y|^{2(d-i-1)})P_{2i+1}(x).\nonumber
\end{eqnarray}
Notice that the second term on the right hand side of the above equality vanishes when $d=1$. Since $(1-|y|^{2k})$ is divisible by $1-|y|^2$ for any integer $k\geq 1$, it follows from the above equality that 
\begin{eqnarray}\label{Polyn6a}
P(x)=(|y|^2-1)R(x), 
\end{eqnarray}
for some polynomial $R(x)$ of degree $2(d-1)$. This proves the lemma in the 
case $a=0$ and $r=1$, since $|y|^2=\sum\limits_{i=1}^{k+1}x_i^2$. To obtain the 
lemma in the general case, it suffices to apply (\ref{Polyn6a}) for the 
polynomial $\overline{P}(x):=P(rx+a)$.
\end{proof}

Theorem \ref{Stronger} states that if a regular algebraic hypersurface in 
$\mathbb R^{n+1}$ has non-zero constant mean curvature, then the highest order 
homogeneous factor $P_m$ of its defining polynomial $P$ is semi-definite (in 
particular, the degree $m$ of $P$ is even). Before proving Theorem \ref{Teo2}, 
for completeness let us say what happens in the case where $P_m$ is definite, 
i.e. $P_m(x)\neq 0$ for all $x\in\mathbb R^{n+1}-\{0\}$:

\begin{thm}\label{Definite}
Let $M^n$ be a regular algebraic hypersurface in $\mathbb R^{n+1},\;n\geq 2$, 
defined by a polynomial $P$ of degree $m$. If the highest order homogeneous 
factor $P_m$ of $P$ is definite, then $M^n$ is compact. In particular, if $M^n$ 
has constant mean curvature, then $M^n$ is a finite union of hyperspheres of 
$\mathbb R^{n+1}$. 
\end{thm}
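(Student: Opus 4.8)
The plan is to deduce compactness of $M$ from the behaviour of $P$ along rays — the same asymptotic analysis already carried out in the proof of Theorem \ref{Stronger} — and then, in the constant mean curvature case, to combine Alexandrov's theorem with the finiteness of the connected components of a real algebraic set.

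\emph{Step 1: compactness.} Replacing $P$ by $-P$ if necessary (which does not change $M$), we may assume $P_m(x)>0$ for all $x\neq 0$. By homogeneity and compactness of $S^n_1$ the number $\delta:=\min_{v\in S^n_1}P_m(v)$ is then strictly positive, and, as shown in the proof of Theorem \ref{Stronger}, $t^{-m}P(tv)\to P_m(v)$ uniformly in $v\in S^n_1$ as $t\to\infty$. Hence there is $t_0>0$ with $t^{-m}P(tv)\geq\delta/2>0$, and in particular $P(tv)>0$, for all $t>t_0$ and $v\in S^n_1$; that is, $P>0$ outside the closed ball of radius $t_0$ centred at the origin. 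Therefore $M=P^{-1}(0)$ is a closed and bounded, hence compact, subset of $\mathbb R^{n+1}$.

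\emph{Step 2: the CMC case.} Assume $M$ has constant mean curvature $H$ and $M\neq\emptyset$ (otherwise there is nothing to prove). First I would check that $H\neq 0$: picking $q\in M$ where $\|x\|$ attains its maximum $\rho$ over $M$ and running the elementary second-variation computation from the proof of Theorem \ref{Stronger} — now comparing $M$ with the sphere $S^n_\rho(0)$, to which $M$ is internally tangent at $q$ — gives $\langle A_\nu v,v\rangle\geq 1/\rho$ for every unit $v\in T_qM$, where $\nu$ is the unit normal at $q$ pointing towards the origin; taking traces yields $|H(q)|\geq 1/\rho>0$, and since $H$ is constant, $H\neq 0$. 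Being a real algebraic subset of $\mathbb R^{n+1}$, $M$ has finitely many connected components $M_1,\dots,M_\ell$; each $M_j$ is a nonempty compact connected embedded hypersurface of $\mathbb R^{n+1}$ (recall $M$ is properly embedded because it is regular) carrying the same nonzero constant mean curvature $H$. By Alexandrov's theorem each $M_j$ is a round hypersphere, necessarily of radius $1/|H|$, and hence $M=\bigcup_{j=1}^{\ell}M_j$ is a finite union of hyperspheres.

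The only substantial ingredient here is Alexandrov's theorem; the remaining steps — the ray asymptotics (already available from Theorem \ref{Stronger}), the fact that a real algebraic set has finitely many connected components, and the non-existence of nonempty compact minimal hypersurfaces of $\mathbb R^{n+1}$ — are routine. If one wishes to go further, applying Lemma \ref{Pol} with $k=n$ to each sphere $M_j$ shows that $P$ is divisible by the product of the corresponding quadratic polynomials, whence, comparing degrees, $m=2\ell$ and $P$ is a nonzero constant times that product; but this refinement is not required for the statement.
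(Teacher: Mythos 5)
Your proof is correct and follows essentially the same route as the paper: the uniform convergence $t^{-m}P(tv)\to P_m(v)$ on $S^n_1$ forces $P>0$ outside a large ball, hence $M$ is compact, and then Alexandrov's theorem identifies each component as a hypersphere. The extra touches (verifying $H\neq 0$ by tangency with a large sphere, and invoking finiteness of components of a real algebraic set, where compactness of the embedded hypersurface already suffices) are sound but not needed beyond what the paper's argument uses.
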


\begin{proof}
Let (\ref{Decomposition}) be the expression of $P$ as the sum of its 
homogeneous factors. Since $P_m$ is definite by hypothesis, changing $P$ by 
$-P$ if necessary we can assume that $P_m(v)>0$ for all $v\in S^n_1$. Let 
$\alpha=\inf\{P_m(v):v\in S^n_1\}>0$. By the proof of Theorem \ref{Stronger}, 
$$
t^{-m}P(tv)\overset{unif}{\longrightarrow} P_m(v) \;\;\text{on}\;\; S^n_1\;\;\text{as}\;\;t\to\infty.
$$
Then there exists $t_0>0$ such 
that $|t^{-m}P(tv)-P_m(v)|<\alpha/2$, for all $t>t_0$ and $v\in S^n_1$, and so
\begin{eqnarray}\label{Eq9c}
t^{-m}P(tv)&\geq& -|t^{-m}P(tv)-P_m(v)|+P_m(v)>-\frac{\alpha}{2}+\alpha=\frac{\alpha}{2}>0,\nonumber
\end{eqnarray} 
for all $t>t_0$ and $v\in S^n_1$. Hence, $P(tv)>0$ for all $t>t_0$ and $v\in 
S^n_1$, which implies that the set $U^{-}=\{x\in\mathbb R^{n+1}:P(x)<0\}$ is 
bounded. Since $M=\partial (U^{-})$, one concludes that $M$ is compact. 

Assume now that $M^n$ has constant mean curvature. Since $M^n$ is embedded, by a well known theorem of Alexandrov \cite{Al} each connected component of $M^n$ is a hypersphere of $\mathbb R^{n+1}$.
\end{proof}

The following result is an immediate consequence of Lemma \ref{Pol} and Theorem \ref{Definite}.

\begin{cor}\label{CorDefinite}
Let $M^n$ be a regular algebraic hypersurface in $\mathbb R^{n+1},\;n\geq 2$, 
defined by an irreducible polynomial $P$ of degree $m$. If the highest order homogeneous factor 
$P_m$ of $P$ is definite and $M^n$ has constant mean curvature, then $M^n$ is a 
hypersphere of $\mathbb R^{n+1}$. 
\end{cor}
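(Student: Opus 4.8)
The plan is to deduce this corollary directly from Theorem~\ref{Definite} and Lemma~\ref{Pol}. First I would apply Theorem~\ref{Definite}: since the highest order homogeneous factor $P_m$ is definite and $M^n$ has constant mean curvature, that theorem tells us $M^n$ is a finite union of hyperspheres of $\mathbb R^{n+1}$. Being a regular algebraic hypersurface, $M^n$ is in particular nonempty, so it contains at least one of these hyperspheres; fix one, say $S^n_r(a)\subset\mathbb R^{n+1}$ with center $a=(a_1,\dots,a_{n+1})$ and radius $r>0$.

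Next I would feed this hypersphere into Lemma~\ref{Pol} with $k=n$. This is an admissible choice since $1\le k\le n$, and in this case $S^k_r(a)\times\mathbb R^{n-k}$ is just $S^n_r(a)$ itself. Moreover $\deg P=m\ge 2$: indeed $m$ cannot be odd, since then $P_m$ would change sign, and $m=0$ is impossible because $M^n\neq\emptyset$. As $P$ vanishes on $S^n_r(a)\subset M=P^{-1}(0)$, the lemma yields that $P$ is divisible by the quadratic polynomial $Q(x)=\sum_{i=1}^{n+1}(x_i-a_i)^2-r^2$.

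Finally I would invoke irreducibility. Writing $P=QR$ for some polynomial $R$, the irreducibility of $P$ together with the fact that $Q$ is non-constant forces $R$ to be a nonzero constant, so $P=cQ$ for some $c\in\mathbb R\setminus\{0\}$. Consequently $M=P^{-1}(0)=Q^{-1}(0)=S^n_r(a)$, which is a hypersphere of $\mathbb R^{n+1}$, as desired.

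I do not anticipate a genuine obstacle: once Theorem~\ref{Definite} and Lemma~\ref{Pol} are available, the argument is essentially bookkeeping. The only points requiring a moment's care are verifying that $k=n$ is permitted in Lemma~\ref{Pol} and that $\deg P\ge 2$, and observing that an irreducible polynomial that is a multiple of a non-unit must be a constant times that non-unit.
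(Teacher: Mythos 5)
Your proposal is correct and follows exactly the route the paper intends: the paper states the corollary as an immediate consequence of Theorem~\ref{Definite} and Lemma~\ref{Pol}, and your argument (compactness gives a union of hyperspheres, Lemma~\ref{Pol} with $k=n$ gives divisibility by the sphere's quadratic, and irreducibility forces $P$ to be a constant multiple of it) is precisely the bookkeeping being left to the reader. Your side checks that $m\geq 2$ and that $k=n$ is admissible are fine as written.
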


\begin{exm}
{\em Let $P:\mathbb R^3\to\mathbb R$ be the polynomial defined by   
$$
P(x,y,z)=\big(x^2+y^2+z^2-1\big)\big((x-3)^2+y^2+z^2-1\big).
$$
It is easy to see that the gradient $\nabla P$ of $P$ vanishes nowhere in 
$M^2:=P^{-1}(0)$. Then $M^2$ is a regular algebraic surface in $\mathbb R^3$. 
The highest order homogeneous factor of $P$ is 
$P_4=x^4+y^4+z^4+2x^2y^2+2x^2z^2+2y^2z^2$, which is clearly definite. Being the 
union of two disjoint unit spheres of $\mathbb R^3$, $M^2$ has constant mean 
curvature. This shows that the hypothesis in Corollary \ref{CorDefinite} that 
$P$ is irreducible cannot be dropped.} 
\end{exm}

\noindent{\it Proof of Theorem \ref{Teo2}}. Since, by hypothesis, the degree $m$ of $P$ is less than or equal to $3$ and $M^n$ has non-zero constant mean curvature, it follows from Theorem \ref{Teo1} that $m=2$. Hence, we can express $P$ as  
\begin{eqnarray}\label{Decomposition2}
P=P_0+P_1+P_2,
\end{eqnarray}
where $P_i,\;i=0,1,2$, is a homogeneous polynomial of degree $i$ in the 
variables $x_1,...,x_{n+1}$. Changing $P$ by $-P$ if necessary, we can assume that
$H=c>0$. We will establish the theorem by proving that 

\vskip10pt

\noindent ($\dagger$)\;\;Up to a rigid motion of $\mathbb R^{n+1}$, $M^n=S^k\times\mathbb R^{n-k}$\; for some $k,\;1\leq k\leq n$, where $S^k$ is a hypersphere of $\mathbb R^{k+1}$.

\vskip10pt

We will prove ($\dagger$) by induction on $n$. If $n=2$, it follows from the well known classification of quadric surfaces in $\mathbb R^3$ and from the hypothesis that $M$ has non-zero constant mean curvature that $M$ is a sphere or a right circular cylinder. This shows that ($\dagger$) holds for $n=2$.

Assume now $n\geq 3$ and that ($\dagger$) holds for $n-1$. If $P_2$ is definite, then $M^n$ is a finite union of hyperspheres of $\mathbb R^{n+1}$ by Theorem \ref{Definite}. Since the degree of $P$ is two, it follows from Lemma \ref{Pol} that $M^n$ is a single hypersphere of $\mathbb R^{n+1}$. Thus ($\dagger$) holds for $n$ in the case that $P_2$ is definite.

If $P_2$ is indefinite, then there exists $w\in S^n_1$ such that $P_2(w)=0$. Then, after a change of coordinates given by an orthogonal transformation that sends $e_{n+1}$ to $w$, the polynomial P can be written as 
\begin{eqnarray}\label{Decomposition3}
P(x_1,...,x_n,x_{n+1})=A_1(x_1,...,x_n)x_{n+1}+A_0(x_1,...,x_n),
\end{eqnarray}
where $A_1(x_1,...,x_n)$ is a polynomial of degree $\leq 1$ and $A_0(x_1,...,x_n)$ a polynomial of degree $\leq 2$. 

\vskip10pt

\noindent{\bf Claim.} $A_1=0$.

\vskip10pt

Assume that the claim is not true. Since the degree of $A_1$ is at most $1$, given $R>0$ there exists a closed ball $B\subset\mathbb R^n$ of radius $R$ on which $A_1(x_1,...,x_n)\neq 0$. If $A_1>0$ on $B$, using (\ref{Decomposition3}) one easily verifies that $P(x_1,...,x_n,x_{n+1})>0$ for all $(x_1,...,x_n)\in B$ and all $x_{n+1}>\max_B |A_0|/\min_B A_1$. Similarly, if $A_1<0$ on $B$ one has $P(x_1,...,x_n,x_{n+1})>0$ for all $(x_1,...,x_n)\in B$ and all $x_{n+1}<\max_B |A_0|/\max_B A_1$. In either case, one sees that there is a ball of radius R in $\mathbb R^{n+1}$ entirely contained in the set $U^{+}=\{x\in\mathbb R^{n+1}:P(x)>0\}$. Then, by an argument used in the proof of Theorem \ref{Stronger}, $H\leq 1/R$ for every $R>0$, contradicting $H=c>0$. This proves the claim.

\vskip5pt

By (\ref{Decomposition3}) and the claim above, $P(x_1,...,x_n,x_{n+1})=A_0(x_1,...,x_n)$, where $A_0:\mathbb R^n\to\mathbb R$ is a polynomial of degree two in the variables $x_1,...,x_n$. Since 0 is a regular value for $P$, so is for $A_0$. Hence $M^n=M_0^{n-1}\times\mathbb R$, where $M_0^{n-1}=A_0^{-1}(0)$ is a regular algebraic hypersurface of $\mathbb R^n$. Since $M^n$ has non-zero constant mean curvature, the same is true of $M_0^{n-1}$. Then, by the induction hypothesis, up to a rigid motion of $\mathbb R^n$ one has $M_0^{n-1}=S^k\times\mathbb R^{n-1-k}$, for some $k,\;1\leq k\leq n-1$, where $S^k$ is a hypersphere of $\mathbb R^{k+1}$. Hence $M^n=S^k\times\mathbb R^{n-k}$, for some $k,\;1\leq k\leq n-1$. This shows that ($\dagger$) holds for $n$ also in the case that $P_2$ is indefinite. Then, by the induction principle, ($\dagger$) holds for every $n\geq 2$, and the theorem is proved.\qed

Alexandre Paiva Barreto
\vskip1pt
Departamento de Matem\'atica
\vskip1pt
Universidade Federal de S\~ao Carlos
\vskip1pt
S\~ao Carlos, SP, Brazil
\vskip1pt
alexandre@dm.ufscar.br;
alexandre.paiva@ufscar.br

\vskip15pt

Francisco Fontenele
\vskip1pt
Departamento de Geometria
\vskip1pt
Universidade Federal Fluminense
\vskip1pt
Niter\'oi, RJ, Brazil
\vskip1pt
fontenele@mat.uff.br

\vskip15pt

Luiz Hartmann
\vskip1pt
Departamento de Matem\'atica
\vskip1pt
Universidade Federal de S\~ao Carlos 
\vskip1pt
S\~ao Carlos, SP, Brazil
\vskip1pt
luizhartmann@ufscar.br;
hartmann@dm.ufscar.br


\begin{thebibliography}{s2}

\bibitem{Al} A.D. Aleksandrov, {\em Uniqueness theorems for surfaces in the large}, Vestnik
Leningrad Univ. Math. {\bf{13}} (1958), 5-8.

\bibitem{BBdCF} J.L. Barbosa, L. Birbrair, M. do Carmo and A. Fernandes, {\em Globally subanalytic CMC surfaces in $\mathbb R^3$}, Electron. Res. Announc. Math. Sci. {\bf{21}} (2014), 186-192.

\bibitem{BdC} J.L. Barbosa and M. do Carmo, {\em On regular algebraic surfaces of $\mathbb R^3$ with constant mean curvature}, J. Differential Geom. {\bf{102}} (2016), 173-178.

\bibitem{Lo} R. L\'opez, {\em Surfaces with constant mean curvature in Euclidean space}, Int. Electron. J. Geom. {\bf 3} (2010), 67-101.

\bibitem{Ni} J.C.C. Nitsche, {\em Lectures on Minimal Surfaces}, Vol. 1, Cambridge University
Press, Cambridge (1989). Introduction, fundamentals, geometry and basic
boundary value problems. Translated from the German by Jerry M. Feinberg,
With a German foreword.

\bibitem{Od} B. Odehnal, {\em On algebraic minimal surfaces}, KoG {\bf 20} (2016), 61-78.

\bibitem{Pe} O. Perdomo, {\em Algebraic constant mean curvature surfaces in Euclidean space}, Houston J. Math. {\bf{39}} (2013), 127-136.

\bibitem{PT} O. Perdomo and V.G. Tkachev, {\em Algebraic CMC hypersurfaces of order 3 in Euclidean spaces}, J. Geom. {\bf{110}} (2019), 5-10.

\bibitem{Sa} J.E. Sampaio, {\em Globally subanalytic CMC surfaces in $\mathbb R^3$ with singularities}, Proc. Roy. Soc. Edinburgh Sect. A {\bf{151}} (2021), 407-424.

\bibitem{S1} A. Small, {\em Algebraic minimal surfaces in $\mathbb R^4$}, Math. Scand. {\bf{94}} (2004), 109-124.

\bibitem{S2} A. Small, {\em On algebraic minimal surfaces in $\mathbb R^3$ deriving from charge 2 monopole
spectral curves}, Internat. J. Math. {\bf{16}} (2005), 173-180.

\bibitem{Tk} V. Tkachev, {\em Minimal cubic cones via Clifford algebras}, Complex Anal. Oper.
Theory {\bf{4}} (2010), 685-700.

\end{thebibliography}
\end{document}